\tikzset{main node/.style={circle,fill=blue!20,draw,minimum size=0.1cm,inner sep=0pt}}
\newtheorem*{corolario*}{Corollary}
\newtheorem*{proposicion*}{Proposition}
\newtheorem*{teorema*}{Theorem}
\newtheorem{teorema}{Theorem}[section]
\newtheorem{proposicion}[teorema]{Proposition}
\newtheorem{corolario}[teorema]{Corollary}
\theoremstyle{definition}
\newtheorem{recordatorio}[teorema]{Reminder}
\newtheorem{definicion}[teorema]{Definition}
\newtheorem{warning}[teorema]{Warning}
\newtheorem{convencion}[teorema]{Convention}
\newtheorem{ejemplo}[teorema]{Example}
\newtheorem{remark}[teorema]{Remark}
\newtheorem{notacion}[teorema]{Notation}
\DeclareMathOperator{\rec}{rec}
\DeclareMathOperator{\mirror}{mirror}
\DeclareMathOperator{\coeff}{coeff}
\DeclareMathOperator{\id}{id}
\DeclareMathOperator{\exc}{exc}
\DeclareMathOperator{\des}{des}
\title{Palindromicity of multivariate Eulerian polynomials}
\author{Alejandro González Nevado\thanks{Alejandromagyar@gmail.com}}
\date{January 2026}
\begin{document}

\maketitle

\begin{abstract}
We lift to the multivariate Eulerian polynomials the identity implying that univariate Eulerian polynomials are palindromic. As a consequence of this generalization, we obtain nice combinatorial identities that can be directly extracted from this polynomial relation and the bijection between permutations involved in the proof of the identity.
\end{abstract}

\section{Introduction}

The Eulerian polynomials are well known in combinatorics because they admit many definitions coming from counting different features in permutations. Here we will use the definition using descents because it generalizes easily to the multivariate case.

\begin{definicion}\cite[Section 2.2]{haglund2012stable}[Univariate Eulerian polynomials]
Let $\mathfrak{S}_{n+1}$ be the symmetric group considered as permutations on the ordered set $[n+1]$ and written in its one-line form. Then we define the univariate polynomial $$a_{n}(x)=\sum_{\sigma\in \mathfrak{S}_{n+1}}x^{\des(\sigma)}\in\mathbb{N}[x]$$ as the generating polynomial for the descent statistic over the symmetric group $\mathfrak{S}_{n+1}$.
\end{definicion}

This polynomial admits a very natural multivariate lifting. We will use the one-line notation for permutations because it is the most comfortable in this setting. We first need to count descents and ascents in a finer way.

\begin{definicion}[Tops sets]\cite[Subsection 2.1]{haglund2012stable}
and \cite[Definition 3.1]{visontai2013stable}\label{dtat}
For $\sigma\in\mathfrak{S}_{n+1}$, we define the \textit{descent top set} $$\mathcal{DT}=\{\sigma_{i}\in[n+1]\mid i\in[n],\sigma_{i}>\sigma_{i+1}\}\subseteq[2,n+1].$$ Similarly, we define the \textit{ascent top set} $$\mathcal{AT}=\{\sigma_{i+1}\in[n+1]\mid i\in[n],\sigma_{i}<\sigma_{i+1}\}\subseteq[2,n+1].$$ 
\end{definicion}

Now this allows us to lift the univariate polynomials to a multivariate setting. We do this tagging the variables through the corresponding indices at which descents and ascents, respectively, happen.

\begin{definicion}\cite[Theorem 3.2]{haglund2012stable}
and \cite[Theorem 3.3]{visontai2013stable}\label{multieulerian}
Let $$A_{n}(\mathbf{x},\mathbf{y}):=\sum_{\sigma\in\mathfrak{S}_{n+1}}\prod_{i\in\mathcal{DT}(\sigma)}x_{i}\prod_{j\in\mathcal{AT}(\sigma)}y_{j}$$ be the \textit{descents-ascents $n$-th Eulerian polynomial}.
\end{definicion}

Here we will mainly concentrate in the descents polynomial $A_{n}(\mathbf{x},\mathbf{1}).$ We will denote it just $A_{n}(\mathbf{x}).$ These polynomials are interesting for different areas of mathematics because they have very nice properties and appear in different contexts naturally. Here we want to prove a combinatorial property that generalizes the palindromicity of the univariate polynomials to the multivariate case.

\section{Preliminaries}

Eulerian polynomials are well-know to verify a palindromic identity. The reciprocal of a univariate polynomial is classically known.

\begin{definicion}[Reciprocal polynomial]\cite[Section 2.1]{joyner2018self}
Let $p\in\mathbb{R}[x]$ be a univariate polynomial. We denote $$\rec(p(x)):=\rec(p)(x):=x^{\deg(p)}p(\frac{1}{x})$$ its \textit{reciprocal}.
\end{definicion}

Thus when one studies multivariate extensions of Eulerian polynomials, it is natural to ask if we can construct these identities also for the reciprocal $\rec(A_{n})$ of $A_{n}$. It turns out that the direct approach works in this case. We remind an important concept of symmetry for univariate polynomials.

\begin{definicion}[Palindromicity as symmetry]\cite[Section 2.1]{joyner2018self}
Let $p\in\mathbb{R}[x]$ be a univariate polynomial. We say that $p$ is \textit{palindromic} if $p=\rec(p).$
\end{definicion}

In particular, the univariate Eulerian polynomials verify this symmetry.

\begin{teorema}
Let $a_{n}$ be the $n$-th univariate Eulerian polynomial. Then $a_{n}=\rec(a_{n}).$ This means that univariate Eulerian polynomials are palindromic.
\end{teorema}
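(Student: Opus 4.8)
The plan is to exhibit a descent-reversing involution on $\mathfrak{S}_{n+1}$ and substitute it into the definition of the reciprocal. First I would record the degree of $a_n$: any two consecutive entries of a one-line permutation of $[n+1]$ form either a descent or an ascent, so $0\le\des(\sigma)\le n$ for every $\sigma\in\mathfrak{S}_{n+1}$, with the value $n$ attained by the decreasing permutation $(n+1)\,n\,\cdots\,2\,1$ and the value $0$ by the identity. Hence $\deg(a_n)=n$, and therefore $\rec(a_n)(x)=x^{n}a_n(1/x)$.

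Next I would introduce the word-reversal map $\mirror\colon\mathfrak{S}_{n+1}\to\mathfrak{S}_{n+1}$ which sends $\sigma=\sigma_1\sigma_2\cdots\sigma_{n+1}$ to $\sigma_{n+1}\cdots\sigma_2\sigma_1$. This is visibly an involution, hence a bijection of $\mathfrak{S}_{n+1}$ with itself, and position $i$ is a descent of $\sigma$ precisely when position $n+1-i$ is an ascent of $\mirror(\sigma)$. Since descents and ascents partition the $n$ consecutive pairs of positions, this yields $\des(\mirror(\sigma))=n-\des(\sigma)$. The computation is then immediate:
$$\rec(a_n)(x)=x^{n}\sum_{\sigma\in\mathfrak{S}_{n+1}}\Bigl(\tfrac{1}{x}\Bigr)^{\des(\sigma)}=\sum_{\sigma\in\mathfrak{S}_{n+1}}x^{\,n-\des(\sigma)}=\sum_{\sigma\in\mathfrak{S}_{n+1}}x^{\des(\mirror(\sigma))}=\sum_{\tau\in\mathfrak{S}_{n+1}}x^{\des(\tau)}=a_n(x),$$
where the fourth equality is the change of summation index $\tau=\mirror(\sigma)$ permitted by the bijection. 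This proves $a_n=\rec(a_n)$.

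I do not anticipate a genuine obstacle: once the involution is in place, the statement is a one-line substitution. The only steps deserving care are the bookkeeping identity $\des(\mirror(\sigma))=n-\des(\sigma)$ — that is, checking that reading the one-line word backwards interchanges descents with ascents without creating or destroying any comparison — and the verification that $\deg(a_n)=n$, which is exactly what makes the prefactor $x^{n}$ in $\rec(a_n)$ line up. One could equally well use the complementation $\sigma_i\mapsto n+2-\sigma_i$ in place of reversal; it is precisely this flexibility in the choice of involution that one will want to keep in mind when lifting the identity to the multivariate polynomials $A_n(\mathbf{x})$.
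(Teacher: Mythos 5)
Your proposal is correct and is exactly the argument the paper's proof alludes to: a bijective count over the descent statistic, here realized by the word-reversal involution together with the observations that $\deg(a_n)=n$ and $\des(\mirror(\sigma))=n-\des(\sigma)$. You have simply supplied the details the paper leaves implicit, and your closing remark about the complementation $\sigma_i\mapsto n+2-\sigma_i$ is apt, since that is the involution whose multivariate analogue ($\tau(i)=n+3-i$) drives the proof of Theorem \ref{especialita}.
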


\begin{proof}
As Eulerian polynomials have a nice combinatorial description in terms of descents of permutations, a simple count (that can elegantly be performed through a bijection) over the descent statistic of permutations shows this.
\end{proof}

The mentioned proof strategy resorting to combinatorics and bijections between permutations will be important for us. In fact, we will follow the same strategy for the multivariate case.

\section{Reciprocal notion for multivariate polynomials}

There are many options to extend the reciprocal operation in a natural way to multivariate polynomials. Here we choose the following definition.

\begin{definicion}[Multivariate reciprocal]
\label{reciprocal}
Let $A(\mathbf{x})\in\mathbb{R}[\mathbf{x}]$ be a multivariate polynomial strictly in the variables $\mathbf{x}$. We define its \textit{reciprocal} $$\rec(A(\mathbf{x}))=\rec(A)(\mathbf{x}):=m_{A}A(\frac{1}{x_{1}},\dots,\frac{1}{x_{n}}),$$ where $m_{A}$ is the \textit{only} minimum degree monic monomial in the variables $\mathbf{x}$ that produces a polynomial after performing the multiplication.
\end{definicion}

For multiaffine polynomials, the monomial $m_{A}$ is clear once we fix the variables. A multiaffine polynomial presenting such monomial reaches a kind of maximality. First, we need to establish a natural convention.

\begin{convencion}
From now on we say that a polynomial is a polynomial \textit{strictly} in the variables $\mathbf{x}$ if it is not possible to write it using a subset of these variables. We then say that each variable in $\mathbf{x}$ is \textit{strictly appearing} in the polynomial.
\end{convencion}

This convention, in particular, will be important for us due to the natural presence of a ghost variable in multivariate Eulerian polynomials. This ghost variable appears because $1$ can never be a descent top and therefore the variable $x_{1}$ does not appear so that $A_{n}(x_{1},\dots,x_{n+1})$ is actually a degree $n$ polynomial in the $n$ variables $(x_{2},\dots,x_{n+1})$. Now, having in mind this convention, we can describe our form of maximality for multiaffine polynomials.

\begin{definicion}[Monomialmaximality]\label{monomax}
We call a multiaffine polynomial in the variables $\mathbf{x}$ \textit{monomialmaximal} if the coefficient of the monomial of maximum degree that can appear in such a multiaffine polynomial not adding to it new strictly appearing variables is not $0$.
\end{definicion}

Clearly, this means that if such polynomial is strictly actually just in the variables $(x_{i_{1}},\dots,x_{i_{m}})$ the monomial $\prod_{j=1}^{m}x_{i_{j}}$ has not $0$ as its coefficient. As we wanted, our polynomials of interest are monomialmaximal.

\begin{proposicion}[Eulerian monomialmaximality]
(Multivariate) Eulerian polynomials $A_{n}(\mathbf{x})$ are monomialmaximal.
\end{proposicion}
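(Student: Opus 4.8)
The plan is to reduce the claim to a single, easily verified combinatorial fact. As already observed, the variable $x_{1}$ is a ghost variable: $1$ can never be a descent top, so $A_{n}(x_{1},\dots,x_{n+1})$ is a multiaffine polynomial strictly in the $n$ variables $(x_{2},\dots,x_{n+1})$, since the decreasing permutation witnesses that each $x_{j}$ with $j\in[2,n+1]$ strictly appears. Hence the monomial of maximum degree that can appear without introducing a new strictly appearing variable is exactly $\prod_{j=2}^{n+1}x_{j}$, and by Definition \ref{monomax} it suffices to show that its coefficient in $A_{n}(\mathbf{x})$ is nonzero.

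Next I would read off that coefficient from Definition \ref{multieulerian} specialised at $\mathbf{y}=\mathbf{1}$: it counts the permutations $\sigma\in\mathfrak{S}_{n+1}$ with $\mathcal{DT}(\sigma)=[2,n+1]$. The key observation is that a descent at position $i\in[n]$ contributes the single descent top $\sigma_{i}$, and the values $\sigma_{1},\dots,\sigma_{n+1}$ are pairwise distinct, so $|\mathcal{DT}(\sigma)|=\des(\sigma)$. Since $[2,n+1]$ has $n$ elements, the requirement $\mathcal{DT}(\sigma)=[2,n+1]$ forces $\des(\sigma)=n$, i.e. every one of the $n$ available positions is a descent, which means $\sigma_{1}>\sigma_{2}>\dots>\sigma_{n+1}$ and hence $\sigma=(n+1,n,\dots,2,1)$. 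Conversely, this fully decreasing permutation has descent top set $\{n+1,n,\dots,2\}=[2,n+1]$.

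Putting the two steps together, the coefficient of $\prod_{j=2}^{n+1}x_{j}$ in $A_{n}(\mathbf{x})$ equals $1$, in particular it is nonzero, so $A_{n}(\mathbf{x})$ is monomialmaximal. I do not expect any genuine obstacle here: the only point requiring a little care is the bookkeeping identity $|\mathcal{DT}(\sigma)|=\des(\sigma)$ together with the resulting fact that covering all of $[2,n+1]$ by descent tops leaves no slack and therefore pins $\sigma$ down to the unique fully decreasing permutation; once that is in place the proposition is immediate. One could alternatively try to deduce monomialmaximality from a reciprocal-type argument, but since this proposition is meant to feed into the palindromicity theorem, it is cleaner to argue directly and combinatorially as above.
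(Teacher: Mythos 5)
Your proof is correct and uses essentially the same idea as the paper: both exhibit the fully decreasing permutation $(n+1\,\cdots\,1)$ as the witness that the top monomial $\prod_{j=2}^{n+1}x_{j}$ has nonzero coefficient. The extra bookkeeping you supply (that this permutation is in fact the \emph{unique} contributor, so the coefficient is exactly $1$) is a nice refinement but not needed for the proposition.
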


\begin{proof}
In the definition of $A_{n}$ consider the term of the sum generated by the permutation (in one-line notation) $\sigma:=(n+1\dots1)\in\mathfrak{S}_{n+1}$. This term generates the maximal monomial possible in this multiaffine polynomial as $\sigma_{i}>\sigma_{i+1}$ for all $i\in[n]$ and, therefore, $A_{n}$ is monomialmaximal.
\end{proof}

Monomialmaximal multiaffine polynomials are nice with respect to taking reciprocals. Thus, the following result now makes it easy to continue our path.

\begin{proposicion}[Multiaffine monomialmaximal reciprocals] Let $A(\mathbf{x})\in\mathbb{R}[\mathbf{x}]$ be a multiaffine monomialmaximal polynomial with $A(\mathbf{0})\neq0$. Then its reciprocal polynomial $\rec(A)(\mathbf{x})$ is also multiaffine and monomialmaximal with $\rec(A)(\mathbf{0})\neq0$.
\end{proposicion}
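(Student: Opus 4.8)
The plan is to reduce everything to the combinatorics of subsets of the index set. First I would fix notation: assuming $\mathbf{x}=(x_{1},\dots,x_{n})$ are exactly the variables in which $A$ is strict, write
\[
A(\mathbf{x})=\sum_{S\subseteq[n]}c_{S}\prod_{i\in S}x_{i},
\]
which is the generic shape of a multiaffine polynomial in these variables. In this language the three hypotheses read: strictness says that for every $i\in[n]$ there is some $S\ni i$ with $c_{S}\neq0$; monomialmaximality (Definition~\ref{monomax}) says $c_{[n]}\neq0$; and $A(\mathbf{0})\neq0$ says $c_{\emptyset}\neq0$.

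Next I would pin down the monomial $m_{A}$ of Definition~\ref{reciprocal}. Substituting $x_{i}\mapsto 1/x_{i}$ turns each term into $c_{S}\prod_{i\in S}x_{i}^{-1}$, so the Laurent polynomial $A(1/x_{1},\dots,1/x_{n})$ genuinely involves $x_{i}^{-1}$ for each $i$, precisely because $A$ is strict in $x_{i}$. Hence the unique minimal monic monomial clearing all denominators is $m_{A}=\prod_{i=1}^{n}x_{i}$, and
\[
\rec(A)(\mathbf{x})=\Big(\prod_{i=1}^{n}x_{i}\Big)\sum_{S\subseteq[n]}c_{S}\prod_{i\in S}x_{i}^{-1}=\sum_{S\subseteq[n]}c_{S}\prod_{i\in[n]\setminus S}x_{i}.
\]
That is, reciprocation acts on the support of each monomial by complementation $S\mapsto[n]\setminus S$, preserving coefficients.

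From this closed form the four assertions drop out. First, $\rec(A)$ is a sum of squarefree monomials, hence multiaffine. Second, the coefficient of the full monomial $\prod_{i=1}^{n}x_{i}$ in $\rec(A)$ is $c_{\emptyset}=A(\mathbf{0})\neq0$; this single fact does double duty, showing both that every $x_{i}$ actually occurs in $\rec(A)$ (so $\rec(A)$ is again strict in $\mathbf{x}$ and the construction is legitimate) and that $\rec(A)$ is monomialmaximal. Third, the constant term of $\rec(A)$ is $c_{[n]}\neq0$ since $A$ is monomialmaximal, so $\rec(A)(\mathbf{0})\neq0$.

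I do not expect a serious obstacle: the only step that needs care is the computation of $m_{A}$, namely verifying that strictness of $A$ in each variable forces the clearing monomial to be exactly the full squarefree monomial $\prod_{i}x_{i}$ and nothing smaller; the rest is bookkeeping with the complementation bijection. It is worth recording the symmetry that emerges, since it explains the shape of the statement: under $\rec$ the two properties ``monomialmaximal'' and ``nonvanishing at the origin'' are interchanged, which is exactly why the two hypotheses on $A$ reappear, swapped, as conclusions about $\rec(A)$.
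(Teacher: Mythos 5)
Your proof is correct and follows essentially the same route as the paper's: write $A$ in set-exponent form, identify $m_{A}$ as the full squarefree monomial, and observe that reciprocation complements monomial supports, so that the hypotheses $c_{\emptyset}\neq0$ and $c_{[n]}\neq0$ swap roles in the conclusion. The only immaterial difference is that you pin down $m_{A}$ via strictness of $A$ in each variable, whereas the paper forces it directly from the nonvanishing of the top coefficient $c_{[n]}$.
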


\begin{proof}
We begin writing $$A(\mathbf{x})=\sum_{\alpha\subseteq\{1,\dots,n\}}a_{\alpha}\mathbf{x}^{\alpha}.$$ We know that $a_{\{1,\dots,n\}}$ is not zero as the polynomial is monomialmaximal. Therefore, $m_{A}$ in the definition of the reciprocal is forced to be the degree-maximal possible monomial in a multiaffine polynomial $\mathbf{x}^{1,\dots,n}$. In sum, all this means that the reciprocal $$\rec(A)(\mathbf{x}):=\mathbf{x}^{\{1,\dots,n\}}A(\frac{1}{x_{1}},\dots,\frac{1}{x_{n}})=\sum_{\alpha\subseteq\{1,\dots,n\}}a_{\alpha}\mathbf{x}^{\{1,\dots,n\}\smallsetminus\alpha}.$$ As $a_{\{1,\dots,n\}}\neq0$ this monomial has nonzero independent term. Similarly, as $A(\mathbf{0})\neq0$, $a_{\emptyset}\neq0$ so $\rec(A)$ is monomialmaximal. Multiaffinitty follows directly from how we wrote the polynomial in set exponent notation.
\end{proof}

In fact, for Eulerian polynomials something else can be said because this transformation actually gives back the same Eulerian polynomial but with the variables permuted. We will see this formally in the next section. Before, we need a reminder.

\begin{recordatorio}[Ghost variables]
Contrary to what we said above, the definition of Eulerian polynomials involve a ghost variable $x_{1}$ which is actually not a variable of the polynomial. However, our arguments will follow in an easier way if we deal with this variable.
\end{recordatorio}

We make this comment and add another warning before we work out the next proof.

\begin{warning}[Definition of cofactor]
Additionally, observe that we took care of this phenomenon in Definition \ref{reciprocal} when we defined $m_{A}$ literally as \textit{the minimum degree monomial in the variables $\mathbf{x}$ that produces a polynomial after performing the multiplication} $m_{A}A(\frac{1}{x_{1}},\dots,\frac{1}{x_{n}})$. This ensures that, even when $\mathbf{x}$ contains ghost variables not appearing at all in $A$, the correcting multiplicative monomial $m_{A}$ will not be affected by that arbitrariness and it is therefore well-defined. We also took care of this when defining monomialmaximality in Definition \ref{monomax}.
\end{warning}

Now we are in position to analyze how these polynomials are built. For this, we look at possible patterns in order to find internal symmetries on them resembling (univariate) palindromicity. This is the topic of the next section.

\section{Fixing subsets of permutations}

We look at permutations and at the sets they fix in order to find the symmetries we search for. For this, we restrict our view to permutations verifying certain conditions at a time.

\begin{notacion}[Mirrors and conditioned permutations]
Let $\mathbf{x}=(x_{1},\dots,x_{n})$, then we denote $\mirror(\mathbf{x}):=(x_{n},\dots,x_{1}).$ For a set of permutations $A\subseteq\mathfrak{S}_{n}$ and two elements $a,b\in[n]$ we denote $A^{a\to b}$ the subset of permutations $\sigma\in A$ sending $a$ to $b$ $$A^{a\to b}:=\{\sigma\in A\mid\sigma(a)=b\}.$$
\end{notacion}

Using the mirror map one can generalize the notion of palindromicity to multivariate polynomials in the sense we pursue here.

\begin{definicion}[Mirrorpalindromicity]
We say that a polynomial $p(\mathbf{x})\in\mathbb{R}[\mathbf{x}]$ is \textit{mirrorpalindromic} if its reciprocal polynomial verifies $$\rec(p)(\mathbf{x})=p(\mirror(\mathbf{x})).$$
\end{definicion}

We will need some additional notation for the proof of the next theorem.

\begin{definicion}[Cuts, splitters and set actions in one-line notation]
Given a permutation $\sigma=(\sigma_{1}\cdots\sigma_{n})\in\mathfrak{S}_{n}$ in the one-line notation, we understand that we can introduce \textit{markers} splitting (or \textit{splitters}) the elements that will be generally commas so that we can write $\sigma=(\sigma_{1}\cdots\sigma_{k},\sigma_{k+1}\cdots\sigma_{n}).$ However, if we have more information about the relations between $\sigma_{k}$ and $\sigma_{k+1}$ we can express this directly in this extended one-line notation using an inequality symbol as separation so that we can write $\sigma=(\sigma_{1}\cdots\sigma_{k}>\sigma_{k+1}\cdots\sigma_{n})$ if we know that  $\sigma_{k}>\sigma_{k+1}$ (and also the other way around if the inequality is reversed). Thus, we signal a descent that we know that happens for sure for that permutation between position $k$ and position $k+1$. In particular, we signal thus that $\sigma_{k}$ is a descent top. Sometimes we will use such information to \textit{cut} the one-line notation through some element. Thus we say that such element establishes a \textit{cut through it}. Thus if we have $\sigma=(\sigma_{1}\cdots\sigma_{k-1} \sigma_{k} \sigma_{k+1}\cdots\sigma_{n})$ and we \textit{cut through} $\sigma_{k}$ this means that we will now consider the formal one-line notations $(\sigma_{1}\cdots\sigma_{k-1})$ and $(\sigma_{k+1}\cdots\sigma_{n}),$ which clearly do not in general need to be permutations by themselves. We will however concatenate these cuts and the cutting elements to form new one-line notations actually representing permutations. Finally, when we have a set $A\subseteq\mathfrak{S}_{n}$ and a formal one-line notation $(\tau_{1}\cdots\tau_{m})$ of elements in $[n+1,n+m]$ we can concatenate them using a splitter to form a \textit{lifting} of the set $A$ injected into the set of permutations $\mathfrak{S}_{n+m}$ defining $(A(\tau_{1}\cdots\tau_{m})):=\{(\sigma_{1}\cdots\sigma_{n}\tau_{1}\cdots\tau_{m})\in \mathfrak{S}_{n+m}\mid (\sigma_{1}\cdots\sigma_{n})\in A\}.$
\end{definicion}

We will use a particular collection of splitters in our proof. We particularize now to our case of interest in order to ensure higher clarity.

\begin{remark}[Splitters and non-splitters]
Notice that $=$ cannot be used as an splitter and if it appears it just serves the purpose of writing the same element on a different way because all the elements in the one-line notation have to be different. Notice also, in particular, that, when we lift a set of permutations in the definition above, we can always use the splitter $<$ and write $(A<(\tau_{1}\cdots\tau_{m})):=\{(\sigma_{1}\cdots\sigma_{n}<\tau_{1}\cdots\tau_{m})\in \mathfrak{S}_{n+m}\mid (\sigma_{1}\cdots\sigma_{n})\in A\}$ because $\sigma_{i}<\tau_{j}$ for any $i\in[1,n]$ and $j\in[1,m].$
\end{remark}

\section{Counting for clarity}

We will introduce notation for sets of permutations in terms of their sets of descents.

\begin{definicion}[Exact descent]
\label{rns}
Let $S\subseteq[n+1]$. We introduce the set of permutations that \textit{descend exactly} at $S$, i.e., $$R(n,S):=\{\sigma\in\mathfrak{S}_{n+1}\mid S=\mathcal{DT}(\sigma)\}.$$
\end{definicion}

Reference to $n$ can be dropped when $n$ is big enough or understood by the context and therefore we might simply write $R(S)$ instead sometimes

\begin{remark}[Limited cardinality and shorthands]\label{rshortening}
When $n$ is fixed we simply denote these numbers $R(S)$ for any subset $S\subseteq[n+1].$
\end{remark}

\begin{notacion}\label{defe}
Let $X\subseteq[n+1]$ we denote $$X(\mathfrak{S}_{n+1}):=\{\sigma\in\mathfrak{S}_{n+1}\mid\mathcal{DT}(\sigma)\subseteq X\}.$$
\end{notacion}

Computing the cardinal of these sets require the use of two maps on sets and tuples.

\begin{notacion}[Operators]\cite[Appendix A]{davis2018pinnacle}
For an ordered set $X=\{x_{1}<\dots<x_{k}\}$, we define the operator $$\alpha(X)=(x_{1}-1,x_{2}-x_{1},x_{3}-x_{2},\dots,x_{k}-x_{k-1}).$$ And for a tuple $\beta=(\beta_{1},\dots,\beta_{k})$ we denote by $$\beta\hat{!}=(k+1)^{\beta_{1}}k^{\beta_{2}}\cdots3^{\beta_{k-1}}2^{\beta_{k}}.$$
\end{notacion}

For completeness, we convene $\alpha(\emptyset)=()$ and $()\hat{!}=1$. With this, we are ready to find the cardinal of the sets of permutations considered in Notation \ref{defe}.

\begin{proposicion}\cite[Theorem A.1]{davis2018pinnacle}
Let $X\subseteq[n]$. Then $$|X(\mathfrak{S}_{n})|=\alpha(X)\hat{!}$$
\end{proposicion}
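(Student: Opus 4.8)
The plan is to evaluate $|X(\mathfrak{S}_{n})|$ by means of the classical \emph{insertion} encoding of permutations. Deleting from $\sigma\in\mathfrak{S}_{n}$ its current smallest entry over and over shows that $\sigma$ is equivalent to the data of a choice, for $v$ running through $n,n-1,\dots,1$, of one of the $n-v+1$ gaps of the current word (which uses the symbols $\{v+1,\dots,n\}$) into which $v$ is inserted; this is a bijection between $\mathfrak{S}_{n}$ and all such sequences of gap choices, so it is enough to count the sequences whose output $\sigma$ satisfies $\mathcal{DT}(\sigma)\subseteq X$.

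The heart of the argument is the effect of one insertion on the descent top set. Insert the new minimum $v$ into a word $w$ on $\{v+1,\dots,n\}$ (all of whose entries exceed $v$) and call the result $w'$. If $v$ goes into the leftmost gap then $\mathcal{DT}(w')=\mathcal{DT}(w)$, since the only new adjacency $v<w_{1}$ is an ascent and no old adjacency is disturbed; if instead $v$ is placed immediately after an entry $w_{i}$ of $w$ --- including the case $i=|w|$, i.e.\ appending $v$ on the right --- then $\mathcal{DT}(w')=\mathcal{DT}(w)\cup\{w_{i}\}$, because $w_{i}>v$ turns $w_{i}$ into a descent top while $v$ itself, followed by the larger $w_{i+1}$ when present, is not. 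So every insertion either \emph{marks} one entry already present or marks nothing. I would then prove that $\mathcal{DT}(\sigma)$ equals exactly the set of marked entries. Monotonicity: once $w_{i}$ is marked it has a right neighbour strictly below it, and every entry inserted afterwards is smaller still, so $w_{i}$ retains a smaller right neighbour to the end and remains a descent top of $\sigma$. Completeness: if $v'\in\mathcal{DT}(\sigma)$ its right neighbour in $\sigma$ is smaller than $v'$, whereas at the moment $v'$ was first inserted any right neighbour it had exceeded it, and the right neighbour of $v'$ can change only when something is inserted directly after $v'$, necessarily an entry smaller than $v'$; hence such an insertion occurred, i.e.\ $v'$ was marked. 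Therefore $\mathcal{DT}(\sigma)\subseteq X$ if and only if every insertion used to build $\sigma$ goes into the leftmost gap or directly after an entry of $X$.

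It remains to count the admissible sequences. At the step inserting $v$ the current word is a permutation of $\{v+1,\dots,n\}$, so the number of admissible gaps --- the leftmost one together with those directly after an entry of $X$ --- is $1+|X\cap\{v+1,\dots,n\}|$, a quantity that depends only on which symbols are present and not on their order. Multiplying over $v=n,n-1,\dots,1$ yields $|X(\mathfrak{S}_{n})|=\prod_{v=1}^{n}\bigl(1+|X\cap[v+1,n]|\bigr)$. Writing $X=\{x_{1}<\dots<x_{k}\}$, this factor equals $k+1$ for the $x_{1}-1$ integers $v<x_{1}$, equals $k$ for the $x_{2}-x_{1}$ integers $v$ with $x_{1}\le v<x_{2}$, and so on down to $2$ for the $x_{k}-x_{k-1}$ integers $v$ with $x_{k-1}\le v<x_{k}$, and equals $1$ for every $v\ge x_{k}$; collecting exponents turns the product into $(k+1)^{x_{1}-1}k^{x_{2}-x_{1}}\cdots 2^{x_{k}-x_{k-1}}$, which is exactly $\alpha(X)\hat{!}$ by the definitions of $\alpha$ and $\hat{!}$. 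The degenerate cases are immediate: $X=\emptyset$ gives a product of $1$'s, equal to $()\hat{!}$, and the element $1$ is never marked, so when $x_{1}=1$ it contributes nothing, matching the exponent $0$ it receives.

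I expect the only genuine obstacle to be the bookkeeping of the second paragraph: showing that under these insertions the descent top set never shrinks and is recorded exactly by the marked entries. A priori one might worry that a later insertion could quietly destroy an earlier descent, and the little argument about right neighbours is precisely what rules that out. Granting it, the insertion bijection, the per-step count, and the final algebraic simplification are all routine.
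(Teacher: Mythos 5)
Your argument is correct and complete. Note that the paper does not actually prove this proposition --- it imports it from \cite[Theorem A.1]{davis2018pinnacle} --- so there is no in-paper proof to compare against; your insertion argument (building $\sigma$ by inserting $n,n-1,\dots,1$, observing that each insertion either lands in the leftmost gap or permanently marks the entry it follows as a descent top, and checking that the count $1+|X\cap[v+1,n]|$ of admissible gaps at each step depends only on $v$) is exactly the kind of sequential-choice count that the descending factors $(k+1)^{\beta_1}k^{\beta_2}\cdots 2^{\beta_k}$ in the definition of $\hat{!}$ suggest, and it handles the degenerate cases $X=\emptyset$ and $1\in X$ correctly. The one step worth stating explicitly in a final write-up is the ``right neighbour'' lemma you isolate at the end --- that an entry's right neighbour only ever changes to something smaller, so marked entries stay descent tops and unmarked entries never become them --- but you have already supplied that argument.
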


We then obtain the following expression for the cardinals $|R(n,X)|.$

\begin{corolario}[Cardinal of sets with exact descent top set]\cite[Appendix A]{davis2018pinnacle}
\label{coroR}
We can express the number $|R(n-1,X)|$ in terms of deletions in the initial set as
\begin{gather}\label{coroR2}
    |R(n-1,X)|=\sum_{J\subseteq X}(-1)^{|X\smallsetminus J|}\alpha(J)\hat{!}.
\end{gather}
\end{corolario}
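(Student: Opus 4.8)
The plan is to derive the formula from the preceding Proposition by a single application of Möbius inversion (inclusion--exclusion) over the Boolean lattice of subsets of $X$. The key observation is that the descent top set $\mathcal{DT}(\sigma)$ is a well-defined function of $\sigma$, so the sets $R(n-1,J)$ for varying $J\subseteq X$ are pairwise disjoint and partition $X(\mathfrak{S}_{n})$ according to the \emph{exact} descent top set:
\[
X(\mathfrak{S}_{n})=\bigsqcup_{J\subseteq X}R(n-1,J),
\]
since for a permutation $\sigma\in\mathfrak{S}_{n}$ the condition $\mathcal{DT}(\sigma)\subseteq X$ is equivalent to $\mathcal{DT}(\sigma)=J$ for exactly one $J\subseteq X$.

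First I would take cardinalities on both sides of this disjoint decomposition to get $|X(\mathfrak{S}_{n})|=\sum_{J\subseteq X}|R(n-1,J)|$, and combine this with the previous Proposition to obtain $\alpha(X)\hat{!}=\sum_{J\subseteq X}|R(n-1,J)|$ for every $X\subseteq[n]$ (keeping in mind the index shift: $R(n-1,\cdot)$ lives inside $\mathfrak{S}_{n}$, matching $X(\mathfrak{S}_{n})$). In particular this identity holds simultaneously for $X$ and for all of its subsets, which is exactly the hypothesis needed to invert.

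Then I would invert. Writing $f(X):=|R(n-1,X)|$ and $g(X):=|X(\mathfrak{S}_{n})|=\alpha(X)\hat{!}$, the relation just obtained reads $g(X)=\sum_{J\subseteq X}f(J)$, so by the Möbius inversion formula on the Boolean lattice, where $\mu(J,X)=(-1)^{|X\smallsetminus J|}$, we recover
\[
f(X)=\sum_{J\subseteq X}(-1)^{|X\smallsetminus J|}g(J)=\sum_{J\subseteq X}(-1)^{|X\smallsetminus J|}\alpha(J)\hat{!},
\]
which is exactly \eqref{coroR2}. (Alternatively one can run the inclusion--exclusion explicitly, peeling off one element of $X$ at a time, but the lattice-inversion phrasing is cleaner.) The argument carries no genuine obstacle; the only points requiring a little care are the bookkeeping of the index shift and the degenerate case $X=\emptyset$, which is consistent with the conventions $\alpha(\emptyset)=()$ and $()\hat{!}=1$ fixed just above.
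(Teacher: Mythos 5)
Your argument is correct and is precisely the standard one: the disjoint decomposition $X(\mathfrak{S}_{n})=\bigsqcup_{J\subseteq X}R(n-1,J)$ combined with the preceding Proposition and M\"obius inversion on the Boolean lattice yields the stated formula. The paper itself gives no proof here, simply citing \cite[Appendix A]{davis2018pinnacle}, and your derivation is exactly the argument that citation contains, with the index shift and the $X=\emptyset$ convention handled correctly.
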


\section{Multivariate Eulerian mirrorpalindromicity}

Here one has to handle ghost variables adequately.

\begin{notacion}
Due to $x_{1}$ being a ghost variable, denote, during the next theorem and its associated proof, $\mathbf{x}=(x_{2},\dots,x_{n+1})$ for brevity.
\end{notacion}

Analyzing the descent top sets we can now provide a proof of our main theorem.

\begin{teorema}[Eulerian mirrorpalindromicity]\label{especialita}
The $n$-th multivariate Eulerian polynomial $A_{n}(\mathbf{x})$ is mirrorpalindromic.
\end{teorema}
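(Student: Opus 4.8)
\emph{Step 1: reduction to an enumerative identity.} The polynomial $A_n(\mathbf{x})$ is multiaffine in $\mathbf{x}=(x_2,\dots,x_{n+1})$, since each $\mathcal{DT}(\sigma)$ is a set; it is monomialmaximal by the Eulerian monomialmaximality proposition; and $A_n(\mathbf{0})=1\neq 0$ because the identity is the unique permutation with $\mathcal{DT}(\sigma)=\emptyset$. Hence the proposition on multiaffine monomialmaximal reciprocals applies with $m_{A_n}=x_2x_3\cdots x_{n+1}$, giving
\[
\rec(A_n)(\mathbf{x})=x_2x_3\cdots x_{n+1}\cdot A_n\!\left(\tfrac1{x_2},\dots,\tfrac1{x_{n+1}}\right)=\sum_{\sigma\in\mathfrak{S}_{n+1}}\mathbf{x}^{[2,n+1]\smallsetminus\mathcal{DT}(\sigma)}.
\]
Writing $\phi$ for the involution $i\mapsto n+3-i$ of $[2,n+1]$, evaluating $A_n$ at $\mirror(\mathbf{x})$ is exactly the substitution $x_i\mapsto x_{\phi(i)}$, so $A_n(\mirror(\mathbf{x}))=\sum_{\sigma\in\mathfrak{S}_{n+1}}\mathbf{x}^{\phi(\mathcal{DT}(\sigma))}$. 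Both sides being multiaffine, we may compare the coefficient of each squarefree monomial after grouping permutations by descent top set, and the theorem becomes equivalent to the purely enumerative statement
\[
|R(n,S)|=\bigl|R\bigl(n,\;[2,n+1]\smallsetminus\phi(S)\bigr)\bigr|\qquad\text{for every }S\subseteq[2,n+1].
\]

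\emph{Step 2: the bijection.} It suffices to produce a single bijection $\Psi\colon\mathfrak{S}_{n+1}\to\mathfrak{S}_{n+1}$ with $\mathcal{DT}(\Psi(\sigma))=[2,n+1]\smallsetminus\phi(\mathcal{DT}(\sigma))$, since $\Psi$ then restricts to a bijection $R(n,S)\to R(n,[2,n+1]\smallsetminus\phi(S))$ for each $S$. I would build $\Psi$ by induction on $n$. Conditioning on the position $k$ of the largest letter $n+1$ (that is, working fibrewise over the sets $\mathfrak{S}_{n+1}^{\,k\to n+1}$), cut the one-line notation of $\sigma$ through $n+1$ into two formal one-line notations $\lambda=(\sigma_1\cdots\sigma_{k-1})$ and $\mu=(\sigma_{k+1}\cdots\sigma_{n+1})$ supported on complementary subsets of $[n]$; the letter $n+1$ lies in $\mathcal{DT}(\sigma)$ exactly when $\mu\neq\emptyset$, and the descents internal to $\lambda$ and to $\mu$ are untouched by the cut. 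After order-isomorphically relabelling $\lambda$ and $\mu$ as honest permutations of smaller symmetric groups, apply the inductive hypothesis to each piece with the correspondingly reflected-and-complemented target descent top set, then reassemble the image by concatenating the two transformed pieces --- in the order dictated by the reflection, which reverses the reading direction --- via the lifting/concatenation operation, inserting the forced splitter ($<$ or $>$) at each seam. The base cases $n\le 1$ are immediate, and because $\mirror$ and $\phi$ are involutions the same construction applied to the mirrored data supplies the inverse, so $\Psi$ is a bijection. (Specialising every variable to a single $x$ turns $\mathcal{DT}(\Psi(\sigma))=[2,n+1]\smallsetminus\phi(\mathcal{DT}(\sigma))$ into $\des(\Psi(\sigma))=n-\des(\sigma)$, recovering the palindromicity of $a_n(x)$ as a special case.)

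\emph{Step 3: the main obstacle, and an algebraic fallback.} The delicate point is the reassembly in Step 2: the target set $[2,n+1]\smallsetminus\phi(\mathcal{DT}(\sigma))$ intertwines a \emph{complement} with a \emph{reflection}, both global operations on $[2,n+1]$, whereas a cut of the one-line notation only has local access to the two pieces; one must check that the descent top sets forced by the inductive hypothesis on the two transformed pieces, together with the splitter contributed at each seam and the membership of the extreme letters $2$ and $n+1$, recombine to give exactly $[2,n+1]\smallsetminus\phi(\mathcal{DT}(\sigma))$, and that the relabellings stay consistent down the recursion. Should one prefer an algebraic argument, the enumerative identity of Step 1 can instead be obtained non-bijectively from Corollary~\ref{coroR}: Möbius inversion over the Boolean lattice of subsets of $[2,n+1]$ recasts it as the assertion that, for every $V\subseteq[2,n+1]$,
\[
\bigl|\{\sigma\in\mathfrak{S}_{n+1}:\mathcal{DT}(\sigma)\supseteq V\}\bigr|
=\bigl|\{\sigma\in\mathfrak{S}_{n+1}:\mathcal{DT}(\sigma)\subseteq[2,n+1]\smallsetminus\phi(V)\}\bigr|
=\alpha\bigl([2,n+1]\smallsetminus\phi(V)\bigr)\hat{!},
\]
the last equality being the count of Davis et al.; it then remains only to establish a closed product formula for the ``descent tops containing $V$'' count on the left and to match it with $\alpha([2,n+1]\smallsetminus\phi(V))\hat{!}$.
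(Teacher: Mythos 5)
Your Step 1 is correct and is exactly the paper's reduction: both sides are multiaffine, and comparing coefficients of $\mathbf{x}^K$ turns the theorem into the enumerative identity $|R(n,S)|=|R(n,[2,n+1]\smallsetminus\phi(S))|$. The gap is in Step 2, and you have in fact located it yourself in Step 3 without closing it: the reassembly of the two relabelled pieces is never specified precisely enough to verify, and as written the decomposition is structurally misaligned with the target. Concretely, cutting the source at its maximum $n+1$ controls whether $n+1\in\mathcal{DT}(\sigma)$, which under $S\mapsto[2,n+1]\smallsetminus\phi(S)$ corresponds to whether $2$ is a descent top of the image, i.e.\ to whether $2$ immediately precedes $1$ somewhere deep inside the image --- a condition at the position of the \emph{minimum} of the image, not at its seam. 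Symmetrically, whether $n+1$ is a descent top of the image is governed by whether $2\in\mathcal{DT}(\sigma)$. Your recursion gives the two image pieces no access to this information, nor does it say how the supports of the image pieces are chosen so that the order-isomorphic relabellings, which destroy the global reflection $\phi$, can be undone consistently. The algebraic fallback is likewise left open: the asserted product formula for $|\{\sigma:\mathcal{DT}(\sigma)\supseteq V\}|$ and its match with $\alpha([2,n+1]\smallsetminus\phi(V))\hat{!}$ is precisely the content that would need proving, and you explicitly defer it.

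For contrast, the paper avoids the piecewise relabelling entirely: it lifts $\sigma$ to $\mathfrak{S}_{n+2}$ by appending $n+2$ (which preserves the descent top set, justified via Corollary \ref{coroR}), cuts at the position of $1$, swaps the two blocks around the letters $1$ and $n+2$, and only then applies the order-reversing map $\tau(i)=n+3-i$ \emph{globally} to all letters. Because $\tau$ reverses all comparisons at once, every descent becomes an ascent and vice versa, so complementation of the descent top set is automatic; the cut-and-swap at $1$ and $n+2$ exists solely to handle the two boundary letters correctly. If you want to salvage your inductive scheme, the natural fix is to abandon the cut at the maximum of the source in favour of this single global order reversal, or else to cut the source at $1$ and the image at $n+1$ simultaneously and track the seam letters explicitly --- at which point you have essentially reconstructed the paper's map.
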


\begin{proof}
Following the definitions of these polynomials, we have that \begin{gather*}\rec(A_{n})(\mathbf{x}):=\mathbf{x}^{\mathbf{1}}\sum_{\sigma\in\mathfrak{S}_{n+1}}\prod_{i\in\mathcal{DT}(\sigma)}\frac{1}{x_{i}}=\sum_{\sigma\in\mathfrak{S}_{n+1}}\prod_{i\in[2,n+1]\smallsetminus\mathcal{DT}(\sigma)}x_{i}\end{gather*} and, introducing the permutation $$\tau\colon[n+2]\to[n+2],i\mapsto\tau(i):=n+3-i$$ and calling $\mathbf{y}=\mirror(\mathbf{x})$ so that $y_{i}=x_{\tau(i)},$ that $A_{n}(\mirror(\mathbf{x}))=A_{n}(\mathbf{y})=$\begin{gather*}
   \sum_{\sigma\in\mathfrak{S}_{n+1}}\prod_{i\in\mathcal{DT}(\sigma)}y_{i}=\sum_{\sigma\in\mathfrak{S}_{n+1}}\prod_{i\in\mathcal{DT}(\sigma)}x_{\tau(i)}.\end{gather*} Now we analyze these polynomials coefficient-wise having in mind that, by the previous propositions, they are multiaffine. Thus, fixing a set $K\subseteq[2,n+1]$, we have that \begin{gather*}\coeff(\mathbf{x}^{K},\rec(A_{n})(\mathbf{x}))=|\{\sigma\in\mathfrak{S}_{n+1}\mid K= [2,n+1]\smallsetminus\mathcal{DT}(\sigma)\}|\\=|\{\sigma\in\mathfrak{S}_{n+1}\mid \mathcal{DT}(\sigma)=[2,n+1]\smallsetminus K\}|:=|R(L)|\end{gather*} with $L:=[2,n+1]\smallsetminus K$ while \begin{gather*}\coeff(\mathbf{x}^{K},A_{n}(\mirror(\mathbf{x})))=|\{\sigma\in\mathfrak{S}_{n+1}\mid K=\tau(\mathcal{DT}(\sigma))\}|=\\|\{\sigma\in\mathfrak{S}_{n+1}\mid \tau(K)=\mathcal{DT}(\sigma)\}|=|R(\tau(K))|,\end{gather*} as $\tau\circ\tau=\id$. Observe that it is in general not evident that these cardinalities are equal as the sets at which the operator $R$ is evaluated in each case are different (even their cardinalities are so: respectively, $n-|K|$ and $|K|$). In order to see that these cardinalities are in fact equal we will establish a bijection between the sets $R(\tau(K))$ and $R(L)$. The first observation that we have to make in our way towards the construction of the mentioned bijection is a possibly oversight fact of Corollary \ref{coroR}. We need to consider this fact because, although we finish if we establish a bijection between the set of permutations with descent top $L$ and the set of permutations with descent top $\tau(K)$, the construction of such bijection between permutations belonging to $\mathfrak{S}_{n+1}$ will require us to first lift them to $\mathfrak{S}_{n+2}$, as a consequence of the fact that $\tau([1,n+1])=[2,n+2]$. Understanding why we are able (and it is therefore useful) to do that will require a close look at the formula in Corollary \ref{coroR}. Observe that the cardinal $|R(n,X)|$ of $R(n,X)$ does not in fact depend on $n$ and we could then write instead that for, all $k\geq\max(X)-1$, we have $$|R(k,X)|=\sum_{J\subseteq X}(-1)^{|X\smallsetminus J|}\alpha(J)\hat{!}.$$ This means that the number of permutations in $\mathfrak{S}_{n+1}$ with descent top $L$ equals the number of permutations with descent top $L$ in $\mathfrak{S}_{n+2}$. This also means that we can biject one set into the other. In particular, we want this to happen through the restriction of the obvious injection $$\mathfrak{S}_{n+1}\to\mathfrak{S}_{n+2},\sigma\mapsto (\sigma\ n+2)$$ to the set $R(n,L)\subseteq\mathfrak{S}_{n+1}$, as the restriction of this injection verifies $$R(n,L)\to R(n+1,L)\subseteq\mathfrak{S}_{n+2}^{n+2\to n+2}$$ bijectively because it is obvious that the injection does not modify the descent top sets and we have just seen above in this proof, through the careful observation of Equation \ref{coroR2} of Corollary \ref{coroR}, that $|R(n,L)|=|R(n+1,L)|$. Therefore this bijection tells us how we want to lift our permutations in $\mathfrak{S}_{n+1}$ to $\mathfrak{S}_{n+2}$ preserving the descent top set, which will be necessary to make sense of the bijection involving the use of $\tau$ that we will construct because $\tau([1,n+1])=[2,n+2].$ In particular, observe that preserving the descent top \textit{requires} the new biggest element $n+2$ of the image permutation to be set at the end in the one-line notation (i.e., to be fixed by the image permutation) because, otherwise, $n+2$ in any other position would be a descent top that would have to be added to the set of descent tops of the transformation. Symmetrically (as $\tau(n+2)=1$), we need to analyze another very important element of these permutations: the position of the never-descent-top element $1$. This element establishes a fundamental cut in the permutation and, moreover, verifies $\tau(1)=n+2$, which gives us a helpful clue about how our desired bijection should act about it. In order to see this, we establish that we will send a lifted permutation $$\mathfrak{S}_{n+2}^{n+2\to n+2}\ni(\sigma < n+2)=(\sigma_{1}\cdots\sigma_{k-1}>1<\sigma_{k+1}\cdots\sigma_{n+1} < n+2)$$ with $1$ at position $k$ and $n+2$ at the end to the new permutation $$(\sigma_{k+1}\cdots\sigma_{n+1}<n+2>\sigma_{1}\cdots\sigma_{k-1}>1)\in\mathfrak{S}_{n+2}^{n+2\to 1},$$ where we remark that the sequences $(\sigma_{1}\cdots\sigma_{k-1})$ and $(\sigma_{k+1}\cdots\sigma_{n+1})$ could be empty if $1$ is at the beginning or at the end, respectively, of $\sigma\in\mathfrak{S}_{n+1}.$ Observe that this is a bijection $\mathfrak{S}_{n+2}^{n+2\to n+2}\to\mathfrak{S}_{n+2}^{n+2\to 1}$ whose domain is precisely the subset of $\mathfrak{S}_{n+2}$ that coincides with our respecting-descent-top-sets immersion of $\mathfrak{S}_{n+1}$ in $\mathfrak{S}_{n+2}$ as $(\mathfrak{S}_{n+1}<n+2)$. Observe, furthermore, that this last bijection sends the elements with descent top $L\subseteq[2,n+1]$ of $\mathfrak{S}_{n+2}^{n+2\to n+2}$ (and thus of $\mathfrak{S}_{n+2}$ as this forces $n+2$ to be fixed) to elements of $\mathfrak{S}_{n+2}^{n+2\to 1}$ whose descent top set is $L\cup\{n+2\},$ adding thus only $n+2$ as a \textit{forced} descent top (as it has to be so as soon as it is not fixed, i.e., at the end in the one-line notation). Thus, so far, we have defined a bijection $$R(n,L)\to R(n+1,L\cup\{n+2\})\cap\mathfrak{S}_{n+2}^{n+2\to 1}.$$ Beware that $R(n+1,L\cup\{n+2\})\cap\mathfrak{S}_{n+2}^{n+2\to 1}\neq R(n+1,L\cup\{n+2\})$ as a cut through any ascent produces elements of $R(n+1,L\cup\{n+2\})$ not in the intersection, e.g., $$(\mu_{1}\cdots\mu_{m-1}<\mu_{m}\cdots\mu_{n+1}> 1)\in\mathfrak{S}_{n+2}^{n+2\to 1}\in R(n+1,L\cup\{n+2\})\cap\mathfrak{S}_{n+2}^{n+2\to 1}$$ produces, after a cut by the highlighted ascent, the permutation $$R(n+1,L\cup\{n+2\})\ni(\mu_{m}\cdots\mu_{n+1}>1<\mu_{1}\cdots\mu_{m-1})\notin\mathfrak{S}_{n+2}^{n+2\to 1}.$$ Once this is clear, now we apply $\tau$ to all the elements in the one-line notation of the image under the previous bijection to obtain the new permutation $$(\tau(\sigma_{k+1})\cdots\tau(\sigma_{n+1})>\tau(n+2)=1<\tau(\sigma_{1})\cdots\tau(\sigma_{k-1})>\tau(1)=n+2).$$ This defines a bijection $$R(n+1,L\cup\{n+2\})\cap\mathfrak{S}_{n+2}^{n+2\to 1}\to R(n+1,\tau(K))\cap\mathfrak{S}_{n+2}^{n+2\to n+2}=R(n+1,\tau(K))$$ because the descents tops of an element in the domain appears as maxima of descents of the form $n+2>\sigma_{1}$ or $\sigma_{l}>\sigma_{l+1}$ and these are transformed by $\tau$ into the ascents (as $\tau$ is order reversing) $1<\tau(\sigma_{1})$ and $\tau(\sigma_{l})<\tau(\sigma_{l+1})$ while the rest of the elements appear either as minima in ascents of the form $\sigma_{l}<\sigma_{l+1}$ or at the end (but at the end we have always $1$) and these are transformed by $\tau$ into descents $\tau(\sigma_{l})>\tau(\sigma_{l+1})$ or in $n+2$. Thus the descent top of the image is $$[2,n+1]\smallsetminus L=[2,n+1]\smallsetminus ([2,n+1]\smallsetminus K)=K.$$ Thus the descent top set of this new permutation is $K=[2,n+1]\smallsetminus L$ because $\tau$ is order reversing and we took the care of having a domain whose permutations have $1$ at then, which transforms into $n+2$ at the end, an element we do not desired to be in our descent top. Observe that, as $\tau\circ\tau=\id$, similar arguments allow for the construction of an inverse for this map, which shows that it is in fact a bijection whose image is the whole lifting $R(n+1,\tau(K))$ of $R(n,\tau(K))$. We now just have to undo the lifting. Finally, we cut out the fix element $n+2$ so we can see this last permutation as $$(\tau(\sigma_{k+1})\cdots\tau(\sigma_{n+1})>\tau(n+2)=1<\tau(\sigma_{1})\cdots\tau(\sigma_{k-1}))\in\mathfrak{S}_{n+1},$$ this establish the whole path of the desired bijection and therefore finishes our proof.\end{proof}

The proof of this last theorem has interesting consequences for several combinatorial sums. We collect the implied identities in the next section. We also have to mention that, in general, finding the kind of bijection between permutations that we found in the proof above is an interesting topic by itself that has been previously studied, e.g., in \cite{bigeni2016new,bloom2020revisiting,chen2024bijection} for bijections conserving other properties or accomplishing or involving different relational counting arguments.

\section{Mirrorpalindromicity from the permutation point of view}

Now we can translate many insights obtained in the last proof of the previous section into the purely combinatorial setting. This gives us results about permutations in the form of the three following corollaries.

\begin{corolario}[Combinatorial sums]
For every $n\in\mathbb{N}$ and every $K\subseteq[2,n+1]$ we have that the next quantities with $\tau\colon[n+2]\to[n+2],i\mapsto \tau(i):=n+3-i$ are equal: $|R([2,n+1]\smallsetminus K)|=$ \begin{gather*}
\sum_{J\subseteq[2,n+1]\smallsetminus K}(-1)^{|[2,n+1]\smallsetminus (K\cup J)|}\alpha(J)\hat{!}=\sum_{J\subseteq[2,n+1]\smallsetminus K}(-1)^{n- |K\cup J|}\alpha(J)\hat{!}=\\\sum_{J\subseteq[2,n+1]\smallsetminus K}(-1)^{n- |\tau(K)\cup \tau(J)|}\alpha(J)\hat{!}=\sum_{\tau(J)\subseteq[2,n+1]\smallsetminus \tau(K)}(-1)^{n- |\tau(K)\cup\tau(J)|}\alpha(J)\hat{!}=\\\sum_{S\subseteq[2,n+1]\smallsetminus \tau(K)}(-1)^{n-|\tau(K)\cup S|}\alpha(\tau(S))\hat{!}=\sum_{S\subseteq\tau(K)}(-1)^{|\tau(K)\smallsetminus S|}\alpha(S)\hat{!}=\\\sum_{\tau(J)\subseteq \tau(K)}(-1)^{|\tau(K)\smallsetminus \tau(J)|}\alpha(\tau(J))\hat{!}=\sum_{J\subseteq K}(-1)^{|\tau(K)\smallsetminus \tau(J)|}\alpha(\tau(J))\hat{!}=\\\sum_{J\subseteq K}(-1)^{|K\smallsetminus J|}\alpha(\tau(J))\hat{!}=\sum_{\tau(J)\subseteq K}(-1)^{|K\smallsetminus \tau(J)|}\alpha(J)\hat{!}=|R(\tau(K))|.
\end{gather*} Additionally, as we have that the identities above are true for all $K\subseteq[2,n+1]$ and $\tau$ induces a permutation on the set of parts $2^{[2,n+1]}$ of $[2,n+1]$, we have, for all $K\subseteq[2,n+1]$, equivalently the identity $$\sum_{J\subseteq[2,n+1]\smallsetminus K}(-1)^{n-|K\cup J|}\alpha(\tau(J))\hat{!}=\sum_{J\subseteq K}(-1)^{|K\smallsetminus J|}\alpha(J)\hat{!}.$$
\end{corolario}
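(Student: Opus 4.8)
The plan is to recognise that the two extreme quantities of the displayed chain are exactly the two cardinalities equated by Theorem~\ref{especialita}, and that every line in between is reached from its neighbour by a purely formal manipulation of the involution $\tau$ and of the formula in Corollary~\ref{coroR}; the only substantive equality in the whole chain is the middle one, and it is already contained in the proof of Theorem~\ref{especialita}.

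First I would pull out of that proof the two coefficient computations it carries out: for $K\subseteq[2,n+1]$ one has $\coeff(\mathbf{x}^{K},\rec(A_{n})(\mathbf{x}))=|R([2,n+1]\smallsetminus K)|$ and $\coeff(\mathbf{x}^{K},A_{n}(\mirror(\mathbf{x})))=|R(\tau(K))|$. Theorem~\ref{especialita} says $\rec(A_{n})(\mathbf{x})=A_{n}(\mirror(\mathbf{x}))$, so comparing the coefficient of $\mathbf{x}^{K}$ on the two sides gives $|R([2,n+1]\smallsetminus K)|=|R(\tau(K))|$. This is precisely the passage from the fifth line of the chain (which equals the first line, hence equals $|R([2,n+1]\smallsetminus K)|$) to the sixth line (which, by Corollary~\ref{coroR} applied with $X=\tau(K)$, equals $|R(\tau(K))|$).

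It then only remains to check that the first five lines are all equal to one another and that the last six are all equal to one another, which is routine. Expanding $|R([2,n+1]\smallsetminus K)|$ by Corollary~\ref{coroR} with $X=[2,n+1]\smallsetminus K$ produces the first line (the $n$-independence of $|R(n,X)|$ observed in the proof of Theorem~\ref{especialita} is what licenses the use of the formula), and the step to the second line is $|[2,n+1]\smallsetminus(K\cup J)|=n-|K\cup J|$, valid since $K\cup J\subseteq[2,n+1]$ and $|[2,n+1]|=n$. From there every line follows from its predecessor by iterating three moves: (i) $\tau\colon i\mapsto n+3-i$ restricts to an involutive bijection of $[2,n+1]$, hence preserves cardinalities, commutes with $\cup$, $\cap$ and complementation inside $[2,n+1]$, and satisfies $T\subseteq U\iff\tau(T)\subseteq\tau(U)$; (ii) a change of summation variable $J\leftrightarrow\tau(J)$, legitimate because $\tau\circ\tau=\id$; (iii) $|X\smallsetminus J|=|X|-|J|$ whenever $J\subseteq X$. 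Running these in the displayed order reproduces lines 1 through 5, and symmetrically lines 6 through 11, the latter ending at $|R(\tau(K))|$ via Corollary~\ref{coroR} with $X=\tau(K)$.

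For the final ``equivalently'' identity I would use that $\tau$ induces an involution of $2^{[2,n+1]}$, so the equality just established may be rewritten with $K$ replaced by $\tau(K)$, i.e.\ $|R([2,n+1]\smallsetminus\tau(K))|=|R(K)|$. Corollary~\ref{coroR} with $X=K$ turns the right-hand side into $\sum_{J\subseteq K}(-1)^{|K\smallsetminus J|}\alpha(J)\hat{!}$; applying it with $X=[2,n+1]\smallsetminus\tau(K)$ and then substituting $J\leftrightarrow\tau(J)$, using $|([2,n+1]\smallsetminus\tau(K))\smallsetminus\tau(J)|=n-|\tau(K)\cup\tau(J)|=n-|K\cup J|$, turns the left-hand side into $\sum_{J\subseteq[2,n+1]\smallsetminus K}(-1)^{n-|K\cup J|}\alpha(\tau(J))\hat{!}$; equating the two gives the stated identity. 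The only delicate point in the whole argument is organizational: one must keep the complements, which are taken inside $[2,n+1]$, compatible with $\tau$, whose natural domain is $[n+2]$, and must remember that it is exactly the $n$-independence of $|R(n,X)|$ that allows Corollary~\ref{coroR} to be invoked at both ends of the chain even though the two evaluation sets $[2,n+1]\smallsetminus K$ and $\tau(K)$ generally have different cardinalities.
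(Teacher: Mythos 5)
Your proposal is correct and follows essentially the same route as the paper, which proves this corollary as a direct consequence of the counting formula in Corollary~\ref{coroR} and the equality $|R([2,n+1]\smallsetminus K)|=|R(\tau(K))|$ from Theorem~\ref{especialita}, with all intermediate lines being formal reindexings via the involution $\tau$. Your identification of the fifth-to-sixth line as the only substantive step, and your derivation of the final identity by replacing $K$ with $\tau(K)$, match the paper's (much terser) argument exactly.
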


\begin{proof}
This is a direct consequence of the counting formulas collected in Corollary \ref{coroR} and the identity proved in Theorem \ref{especialita}.
\end{proof}

As a consequence of the observations we made, we can say more about these quantities when we increase the $n$ step by step. For this, we need to generalize $\tau$, which actually depends on $n$, and write $\tau_{s}(i):=s+3-i.$

\begin{corolario}[Sequential combinatorial sums]
Let $n\in\mathbb{N}$ and $K\subseteq[2,n+1]$ and for every set $W\subseteq[2,n+1]$ denote $W^{\max}=W\cup\{n+2\}$ and $W^{\min}=W\cup\{1\}$. Then $|R([2,n+2]\smallsetminus K^{\max})|=|R([2,n+1]\smallsetminus K)|$ and $$|R([2,n+2]\smallsetminus K)|=\sum_{J\subseteq[2,n+1]\smallsetminus K}(-1)^{n-|K|-|J|}(\alpha(J^{\max})\hat{!}-\alpha(J)\hat{!})=$$ $$\sum_{J\subseteq[2,n+1]\smallsetminus \tau_{n}(K)}(-1)^{n-|K|-|J|}(\alpha(\tau_{n}(J^{\min}))\hat{!}-\alpha(\tau_{n}(J))\hat{!})=$$ $$\sum_{J\subseteq K}(-1)^{|K|-|J|}\alpha(\tau_{n+1}(J))\hat{!}=$$ $$\sum_{J\subseteq K}(-1)^{|K|-|J|}(|J|+1)\alpha(\tau_{n}(J))\hat{!}=|R(\tau_{n+1}(K))|.$$
\end{corolario}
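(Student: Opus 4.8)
The plan is to obtain every equality in the chain from Corollary~\ref{coroR}, a few reindexings through the involutions $\tau_{n}\colon i\mapsto n+3-i$ and $\tau_{n+1}\colon i\mapsto n+4-i$, one short computation comparing $\alpha$ and $\hat{!}$ at $\tau_{n}(J)$ and $\tau_{n+1}(J)$, and a single invocation of Theorem~\ref{especialita} at level $n+1$. The first assertion, $|R([2,n+2]\smallsetminus K^{\max})|=|R([2,n+1]\smallsetminus K)|$, I would dispose of immediately: since $K\subseteq[2,n+1]$ forces $n+2\notin K$, the sets $[2,n+2]\smallsetminus(K\cup\{n+2\})$ and $[2,n+1]\smallsetminus K$ are literally equal, and their $R$-cardinalities agree because, as observed in the proof of Theorem~\ref{especialita}, $|R(k,X)|$ is independent of $k$ for $k\geq\max(X)-1$.

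For the first displayed equality I would apply Corollary~\ref{coroR} inside $\mathfrak{S}_{n+2}$ to $X:=[2,n+2]\smallsetminus K=([2,n+1]\smallsetminus K)\cup\{n+2\}$ (a disjoint union), so that $|R([2,n+2]\smallsetminus K)|=\sum_{J'\subseteq X}(-1)^{|X\smallsetminus J'|}\alpha(J')\hat{!}$, and then split the sum by whether $n+2\in J'$. The $J'\not\ni n+2$ are exactly the $J\subseteq[2,n+1]\smallsetminus K$, contributing $(-1)^{n+1-|K|-|J|}\alpha(J)\hat{!}$ since $|X|=n+1-|K|$; the $J'\ni n+2$ are the $J^{\max}$, contributing $(-1)^{n-|K|-|J|}\alpha(J^{\max})\hat{!}$; grouping the pair indexed by each $J$ and pulling out $(-1)^{n-|K|-|J|}$ gives precisely the first sum. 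The second equality is the substitution $J\mapsto\tau_{n}(J)$: $\tau_{n}$ is an involution permuting $[2,n+1]$ setwise, so it carries $[2,n+1]\smallsetminus K$ bijectively onto $[2,n+1]\smallsetminus\tau_{n}(K)$ and preserves cardinalities (so the sign is unchanged after renaming, using $|\tau_{n}(K)|=|K|$), while $\tau_{n}(J^{\min})=\tau_{n}(J)\cup\{\tau_{n}(1)\}=\tau_{n}(J)\cup\{n+2\}=\tau_{n}(J)^{\max}$ converts $\alpha(J^{\max})\hat{!}-\alpha(J)\hat{!}$ into $\alpha(\tau_{n}(J^{\min}))\hat{!}-\alpha(\tau_{n}(J))\hat{!}$.

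For the last equality I would run Corollary~\ref{coroR} backwards: inside $\mathfrak{S}_{n+2}$ applied to $\tau_{n+1}(K)\subseteq[3,n+2]$ and reindexed by $J\mapsto\tau_{n+1}(J)$ with $J\subseteq K$, it gives $|R(\tau_{n+1}(K))|=\sum_{J\subseteq K}(-1)^{|K|-|J|}\alpha(\tau_{n+1}(J))\hat{!}$, the third sum. The fourth equality is the one genuinely computational point: for nonempty $J=\{j_{1}<\dots<j_{m}\}\subseteq[2,n+1]$ the set $\tau_{n+1}(J)$ is the global shift by $1$ of $\tau_{n}(J)$, so listing their elements in increasing order shows $\alpha(\tau_{n+1}(J))$ is $\alpha(\tau_{n}(J))$ with only its first entry increased by $1$; since by definition $\hat{!}$ sends an $m$-tuple $(\beta_{1},\dots,\beta_{m})$ to $(m+1)^{\beta_{1}}m^{\beta_{2}}\cdots2^{\beta_{m}}$, bumping $\beta_{1}$ multiplies the value by $m+1=|J|+1$ (and $J=\emptyset$ is trivial, both sides being $1$). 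This identifies the third sum with $\sum_{J\subseteq K}(-1)^{|K|-|J|}(|J|+1)\alpha(\tau_{n}(J))\hat{!}$ and with $|R(\tau_{n+1}(K))|$. Finally I would close the chain: the first sum is by construction $|R([2,n+2]\smallsetminus K)|$, and Theorem~\ref{especialita} at level $n+1$, read off coefficient-wise as in its proof, gives $|R([2,n+2]\smallsetminus K)|=|R(\tau_{n+1}(K))|$ for every $K\subseteq[2,n+2]$, hence all five expressions coincide. I do not expect a serious obstacle: the only thing that must be \emph{computed} rather than reindexed is the scaling identity $\alpha(\tau_{n+1}(J))\hat{!}=(|J|+1)\,\alpha(\tau_{n}(J))\hat{!}$, and the real care goes into bookkeeping — tracking which symmetric group each $R$ lives in, remembering that $\tau_{n}$ fixes $[2,n+1]$ setwise whereas $\tau_{n+1}$ moves it, and matching signs and index ranges through each substitution.
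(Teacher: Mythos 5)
Your proposal is correct and follows essentially the same route as the paper: split the Corollary~\ref{coroR} sum for $[2,n+2]\smallsetminus K$ according to whether $n+2$ lies in the index set, reindex by the involutions $\tau_{n}$ and $\tau_{n+1}$, verify the scaling $\alpha(\tau_{n+1}(J))\hat{!}=(|J|+1)\alpha(\tau_{n}(J))\hat{!}$ by comparing first entries, and glue the two chains with Theorem~\ref{especialita} at level $n+1$. Your explicit handling of the $J=\emptyset$ case and of the identity $\tau_{n}(J^{\min})=\tau_{n}(J)^{\max}$ is a welcome bit of extra care, but the argument is the same.
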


\begin{proof}
The first equality is evident because $[2,n+2]\smallsetminus K^{\max}=[2,n+1]\smallsetminus K$. For the second we will use Theorem \ref{especialita} above that ensures $|R(\tau_{n+1}(K))|=|R([2,n+2]\smallsetminus K)|$ together with the equalities coming through the use of Corollary \ref{coroR} that tells us $|R([2,n+2]\smallsetminus K)|=$\begin{gather*}
\sum_{J\subseteq[2,n+2]\smallsetminus K}(-1)^{n+1-|K\cup J|}\alpha(J)\hat{!}=\\\sum_{J\subseteq[2,n+1]\smallsetminus K}(-1)^{n+1-|K\cup J|}\alpha(J)\hat{!}+\sum_{J\subseteq[2,n+1]\smallsetminus K}(-1)^{n+1-|K\cup J'|}\alpha(J^{\max})\hat{!}=\\\sum_{J\subseteq[2,n+1]\smallsetminus K}(-1)^{n-|K\cup J|}\alpha(J^{\max})\hat{!}-\sum_{J\subseteq[2,n+1]\smallsetminus K}(-1)^{n-|K\cup J|}\alpha(J)\hat{!}=\\\sum_{J\subseteq[2,n+1]\smallsetminus K}(-1)^{n-|K\cup J|}(\alpha(J^{\max})\hat{!}-\alpha(J)\hat{!})=\\\sum_{J\subseteq[2,n+1]\smallsetminus\tau_{n}(K)}(-1)^{n-|\tau_{n}(K)\cup J|}(\alpha(\tau_{n}(J^{\min}))\hat{!}-\alpha(\tau_{n}(J))\hat{!})
\end{gather*} and $|R(\tau_{n+1}(K))|=$ \begin{gather*}\sum_{J\subseteq K}(-1)^{|K\smallsetminus J|}\alpha(\tau_{n+1}(J))\hat{!}=\sum_{J\subseteq K}(-1)^{|K\smallsetminus J|}(|J|+1)\alpha(\tau_{n}(J))\hat{!}\end{gather*} noting that, writing $J=\{j_{1}<\dots<j_{k}\}$, we have $$\tau_{n}(J)=\{\tau_{n}(j_{k}),\dots,\tau_{n}(j_{1})\}=\{n+3-j_{k}<\dots<n+3-j_{1}\} \mbox{\ and}$$ $$\tau_{n+1}(J)=\{n+4-j_{k}<\dots<n+4-j_{1}\} \mbox{\ so}$$ $$\alpha(\tau_{n}(J))=(n+2-j_{k},j_{k}-j_{k-1},\dots,j_{2}-j_{1})\mbox{\ and}$$ $$\alpha(\tau_{n+1}(J))=(n+3-j_{k},j_{k}-j_{k-1},\dots,j_{2}-j_{1})$$ because we remember that $\alpha(J):=(j_{1}-1,j_{2}-j_{1},\dots,j_{k}-j_{k-1})$ so we have that $\alpha(\tau_{n+1}(J))\hat{!}=(k+1)\alpha(\tau_{n}(J))=(|J|+1)\alpha(\tau_{n}(J)).$
\end{proof}

Reordering and rewriting the expressions obtained above we get another interesting identity.

\begin{corolario}[Reordering sums]
For $n\in\mathbb{N}$ and $K\subseteq[2,n+1]$ we have that $$\sum_{J\subseteq[2,n+1]\smallsetminus K}(-1)^{n-|K|+|J|}\alpha(J^{\max})\hat{!}=\sum_{J\subseteq K}(-1)^{|K|-|J|}(|J|+2)\alpha(\tau_{n}(J))\hat{!}.$$
\end{corolario}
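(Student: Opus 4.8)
The plan is to recognize the left-hand side not as a genuinely new quantity but as the sum of the two consecutive cardinalities $|R([2,n+2]\smallsetminus K)|$ and $|R([2,n+1]\smallsetminus K)|$, both of which were already expanded in the previous two corollaries; adding those two expansions term by term over the subsets $J\subseteq K$ will collapse the coefficients $(|J|+1)+1$ into $|J|+2$ and produce exactly the right-hand side.

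First I would absorb the sign parity: since $(-1)^{2|J|}=1$ we may rewrite $(-1)^{n-|K|+|J|}=(-1)^{n-|K|-|J|}$, so the left-hand side equals $\sum_{J\subseteq[2,n+1]\smallsetminus K}(-1)^{n-|K|-|J|}\alpha(J^{\max})\hat{!}$. Next I would invoke the chain of equalities in the Sequential combinatorial sums corollary, which gives $|R([2,n+2]\smallsetminus K)|=\sum_{J\subseteq[2,n+1]\smallsetminus K}(-1)^{n-|K|-|J|}(\alpha(J^{\max})\hat{!}-\alpha(J)\hat{!})$; splitting this difference and solving for the $\alpha(J^{\max})\hat{!}$-sum yields $\sum_{J\subseteq[2,n+1]\smallsetminus K}(-1)^{n-|K|-|J|}\alpha(J^{\max})\hat{!}=|R([2,n+2]\smallsetminus K)|+\sum_{J\subseteq[2,n+1]\smallsetminus K}(-1)^{n-|K|-|J|}\alpha(J)\hat{!}$. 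The remaining sum is recognized through Corollary \ref{coroR} applied to $X=[2,n+1]\smallsetminus K$, of cardinality $n-|K|$ because $|[2,n+1]|=n$, so that $|X\smallsetminus J|=n-|K|-|J|$ and hence $\sum_{J\subseteq[2,n+1]\smallsetminus K}(-1)^{n-|K|-|J|}\alpha(J)\hat{!}=|R([2,n+1]\smallsetminus K)|$. Thus the left-hand side equals $|R([2,n+2]\smallsetminus K)|+|R([2,n+1]\smallsetminus K)|$.

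It then remains to add the two expansions indexed by subsets of $K$. From the Sequential combinatorial sums corollary, $|R([2,n+2]\smallsetminus K)|=\sum_{J\subseteq K}(-1)^{|K|-|J|}(|J|+1)\alpha(\tau_{n}(J))\hat{!}$, while from the Combinatorial sums corollary (the intermediate form $\sum_{J\subseteq K}(-1)^{|K\smallsetminus J|}\alpha(\tau(J))\hat{!}$ with $\tau=\tau_{n}$), $|R([2,n+1]\smallsetminus K)|=\sum_{J\subseteq K}(-1)^{|K|-|J|}\alpha(\tau_{n}(J))\hat{!}$. Summing these term by term merges the coefficients into $|J|+2$, giving $\sum_{J\subseteq K}(-1)^{|K|-|J|}(|J|+2)\alpha(\tau_{n}(J))\hat{!}$, which is the desired right-hand side.

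I do not expect any real obstacle: the argument is a rearrangement of identities already proved. The only points requiring care are the bookkeeping of the sign exponents — in particular that the parity of $|J|$ is irrelevant, and that $|[2,n+1]|=n$ so $|X\smallsetminus J|=n-|K|-|J|$ — and keeping $\tau_{n}$ and $\tau_{n+1}$ straight, using the relation $\alpha(\tau_{n+1}(J))\hat{!}=(|J|+1)\,\alpha(\tau_{n}(J))\hat{!}$ already established in the proof of the preceding corollary so that both cardinalities are expressed with the same $\tau_{n}$ before one adds them.
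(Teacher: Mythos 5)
Your proposal is correct and follows essentially the same route as the paper: the paper likewise adds the identity $\sum_{J\subseteq[2,n+1]\smallsetminus K}(-1)^{n-|K|-|J|}(\alpha(J^{\max})\hat{!}-\alpha(J)\hat{!})=\sum_{J\subseteq K}(-1)^{|K|-|J|}(|J|+1)\alpha(\tau_{n}(J))\hat{!}$ from the Sequential combinatorial sums corollary to the identity $\sum_{J\subseteq[2,n+1]\smallsetminus K}(-1)^{n-|K|-|J|}\alpha(J)\hat{!}=\sum_{J\subseteq K}(-1)^{|K|-|J|}\alpha(\tau_{n}(J))\hat{!}$ from the Combinatorial sums corollary, merging the coefficients into $|J|+2$. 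Your explicit identification of the two intermediate sums with $|R([2,n+2]\smallsetminus K)|$ and $|R([2,n+1]\smallsetminus K)|$ is a harmless (and clarifying) gloss on the same computation.
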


\begin{proof}
From the corollary above, we know \begin{gather*}\sum_{J\subseteq[2,n+1]\smallsetminus K}(-1)^{n-|K|-|J|}(\alpha(J^{\max})\hat{!}-\alpha(J)\hat{!})=\\\sum_{J\subseteq K}(-1)^{|K|-|J|}(|J|+1)\alpha(\tau_{n}(J))\hat{!}\end{gather*} but, by the initial corollary, we also know \begin{gather*}
    \sum_{J\subseteq[2,n+1]\smallsetminus K}(-1)^{n-|K|-|J|}(\alpha(J)\hat{!})=\sum_{J\subseteq K}(-1)^{|K|-|J|}\alpha(\tau_{n}(J))\hat{!},
\end{gather*} which together establish the identity we wanted to prove.
\end{proof}

Eulerian polynomials can also be written through excedances. We see how this allows us to get more interesting combinatorial results in the following section.

\section{Mirrorpalindromicity and excedances}

First we establish precisely the expression of Eulerian polynomials through excedances.

\begin{remark}[Riordan's bijection]\cite[Equation 4.3]{branden2011proof}\label{Riordanremark}
Using a bijection of Riordan, it was noted that Eulerian polynomials can be written in terms of excedances $A_{n}(\mathbf{x}):=$ $$\sum_{\sigma\in\mathfrak{S}_{n+1}}\prod_{i\in\exc(\sigma)}x_{i},$$ where $\exc(\sigma):=\{\sigma(i)\in[n+1]\mid\sigma(i)>i\}\subseteq[2,n+1]$ is the set of \textit{excedances} of $\sigma$. In imitation of the previous notation, we denote $r(n,X):=\{\sigma\in\mathfrak{S}_{n+1}\mid\exc(\sigma)=X\}.$
\end{remark}

As a consequence of this rewriting of multivariate Eulerian polynomials, we immediately obtain duplicates of the corollaries in the section above. This is the content of the next remark.

\begin{remark}[Twin corollaries]
An immediate consequence of this expression is the fact that $|r(n,X)|=|R(n,X)|$ and therefore we obtain as immediate corollaries results similar to the ones above but for excedances. The corresponding bijection can be constructed composing the bijection given by Riordan in \cite{riordan2014introduction} with the one we used in the proof of Theorem \ref{especialita}.
\end{remark}

In order to analyze how special is the property proved in this theorem that motivated our last discussion and corollaries, we introduce a couple of concepts about (multiaffine) polynomials in the next section.

\section{Conditions, obstructions and conclusion}

We finally study a limitation of polynomials to be mirrorpalindromic in order to understand better how special are our Eulerian polynomials in this regard. In particular, we look at complete polynomials in the sense that all its monomials play a role.

\begin{definicion}[(Degree-)completeness]
A multiaffine polynomial strictly in $n$ variables is \textit{complete} if it has nonzero coefficients multiplying all its possible monomials. A relaxation of this concept is considering \textit{degree-complete} polynomials in which all possible degrees from $1$ to $n$ appear at least once (with a nonzero coefficient).
\end{definicion}

This form of completeness is present in Eulerian polynomials.

\begin{proposicion}[Completeness of Eulerian polynomials]
Eulerian polynomials are complete.
\end{proposicion}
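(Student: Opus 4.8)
The statement to prove is that the multivariate Eulerian polynomial $A_n(\mathbf{x})$ is complete, i.e. for every subset $S\subseteq[2,n+1]$ the coefficient of $\mathbf{x}^{S}$ is nonzero. In the descent‑top model this coefficient is exactly $|R(n,S)|$, the number of permutations $\sigma\in\mathfrak{S}_{n+1}$ with $\mathcal{DT}(\sigma)=S$. So the whole task reduces to exhibiting, for each admissible $S$, at least one permutation with descent top set precisely $S$. Equivalently, using the cardinality formula in Corollary \ref{coroR}, one could show $\sum_{J\subseteq S}(-1)^{|S\smallsetminus J|}\alpha(J)\hat{!}>0$, but the cleaner route is the direct combinatorial construction.

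First I would recall that $S\subseteq[2,n+1]$ because $1$ can never be a descent top (the monomialmaximality proposition already used exactly this permutation $(n+1\,\cdots\,1)$ for the full set $S=[2,n+1]$). Then I would give an explicit construction: write $S=\{s_1<s_2<\cdots<s_k\}$ and build a permutation in one‑line notation that descends exactly at the positions whose tops are the $s_i$ and is increasing elsewhere. The standard way is to read off the composition $\alpha(S)=(s_1-1,\,s_2-s_1,\,\dots,\,s_k-s_{k-1})$ together with the final block of size $n+1-s_k$, and to place the blocks of consecutive integers in the appropriate decreasing‑then‑increasing pattern so that each $s_i$ sits at the top of a descent and no other descent is created. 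Concretely, one arranges the value set into maximal increasing runs determined by $S$: a new run must start precisely at each element of $[2,n+1]\smallsetminus S$ and nowhere else, which pins down the run structure uniquely up to the choice of which values fill which run; choosing the runs to be intervals of consecutive integers placed in decreasing order of their minima realizes the descent tops exactly. I would verify in one or two lines that this $\sigma$ has $\mathcal{DT}(\sigma)=S$, hence $|R(n,S)|\ge 1$, hence $\coeff(\mathbf{x}^{S},A_n(\mathbf{x}))\ge 1>0$.

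The main obstacle is purely bookkeeping: making the block‑placement description precise enough that one can see at a glance that the descents occur at exactly the right positions and that the tops are exactly $s_1,\dots,s_k$ — in particular that no accidental extra descent top is introduced at the junctions between runs and that the first run contributes the top $s_1$ correctly (this is where the ghost element $1$ and the convention $S\subseteq[2,n+1]$ matter). I expect this to be a short argument once the runs are defined as consecutive‑integer intervals, since then within each run the sequence is increasing (no internal descents) and at each run boundary the next run starts with a smaller value, producing a descent whose top is the last (largest) entry of the preceding run, and by the choice of run lengths those tops are exactly the elements of $S$. One may close with the remark that an alternative, non‑constructive proof is available by noting that $|R(n,S)|=|r(n,S)|$ (Remark \ref{Riordanremark}) and that the identity permutation composed with suitable transpositions already witnesses each excedance set, but the descent‑run construction is the most transparent.
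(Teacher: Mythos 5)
Your overall strategy coincides with the paper's: since $\coeff(\mathbf{x}^{S},A_{n}(\mathbf{x}))=|R(n,S)|$, it suffices to exhibit, for every $S\subseteq[2,n+1]$, one permutation whose descent top set is exactly $S$. The gap is in the witness you construct: you are treating the descent \emph{top} set (a set of \emph{values}) as if it were a descent \emph{position} set, which is the setting where the composition $\alpha(S)$ and the ``blocks of consecutive integers'' picture are appropriate. If the maximal increasing runs of $\sigma$ are intervals of consecutive integers, each non-final run contributes its right endpoint as a descent top, and the interval $[a,n+1]$ containing $n+1$ can never be the last run in position (the preceding run would have to end in a value exceeding $a$, but every value outside $[a,n+1]$ is smaller than $a$); hence $n+1\in\mathcal{DT}(\sigma)$ whenever $\sigma$ is not the identity. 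Concretely, for $n+1=5$ and $S=\{3\}$ the only permutations of $[5]$ with one descent and interval runs are $(2345\,1)$, $(345\,12)$, $(45\,123)$ and $(5\,1234)$, all with descent top set $\{5\}$; none realizes $\{3\}$. So your recipe produces no witness for any nonempty $S$ with $n+1\notin S$, and the ``decreasing order of minima'' arrangement fails for the same reason in general (for $S=\{3,5\}$ the partition $[1,2],[3,4],[5,5]$ yields $(5\,34\,12)$ with descent tops $\{4,5\}$).

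The repair is small and is exactly the paper's construction: list the elements of $S=\{s_{1}<\dots<s_{k}\}$ in \emph{decreasing} order, append $1$, then append the remaining elements of $[n+1]$ in increasing order, i.e.\ $\sigma=(s_{k}\cdots s_{1}\,1\,j_{1}\cdots j_{n-k})$. The strictly decreasing prefix makes each $s_{i}$ (and only these, since $1$ is followed by an ascent and the suffix is increasing) a descent top, so $\mathcal{DT}(\sigma)=S$ and $|R(n,S)|\geq1$. Note that the runs of this witness, namely $(s_{k}),\dots,(s_{1}),(1\,j_{1}\cdots j_{n-k})$, are in general \emph{not} intervals of consecutive integers, which is precisely why your ansatz is too restrictive. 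Your fallback suggestions (positivity of the alternating sum in Corollary \ref{coroR}, or the excedance model) are only sketched and would each require their own argument, so as written the proposal is missing the one step that carries the content of the proof.
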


\begin{proof}
Let $m$ be one arbitrary possible monomial and order its variables in increasing order of its indices so we can write $x_{i_{1}}<\dots<x_{i_{d}}$ with $d\leq n$ and $1<i_{1}<\dots<i_{d}$ remembering that $x_{1}$ is never a variable because that index can never be a descent top. For the indices not in the previous monomial (and therefore corresponding to the variables of the polynomial not appearing in $m$ and different from $x_{1}$), order them also in increasing order $1<j_{1}<\dots<j_{s}$ with $s+d=n$. Now, consider the permutation (written in one-line notation) $\sigma:=(i_{d}\cdots i_{1}\cdots1 j_{1}\cdots j_{s})$. This sigma ensures the appearance of the given monomial $m$ and, as this monomial $m$ was arbitrary among all the possible monomials and there are no cancellations on the definition of the Eulerian polynomials, this finishes our proof.
\end{proof}

One can ask then if Theorem \ref{especialita} is true for all multiaffine monomialmaximal complete polynomials. The answer is clearly no as the next example shows.

\begin{ejemplo}[Multiaffine monomialmaximal does not imply mirrorpalindromic]
Consider the multiaffine monomialmaximal complete polynomial $$A(\mathbf{x}):=1+2x_{1}+x_{2}+x_{3}+3x_{2}x_{3}+x_{1}x_{2}+x_{1}x_{3}+x_{1}x_{2}x_{3}.$$ Its reciprocal is $$\rec(A(\mathbf{x}))=x_{1}x_{2}x_{3}+2x_{2}x_{3}+x_{1}x_{3}+x_{1}x_{2}+3x_{1}+x_{3}+x_{2}+1$$ and it is clear that no permutation of the variables can transform the degree $1$ part $3x_{1}+x_{3}+x_{2}$ of $\rec(A)$ into the degree $1$ part $2x_{1}+x_{2}+x_{3}$ of $A$. Thus $A$ is not mirrorpalindromic.
\end{ejemplo}

This finishes our discussion about the polynomial identity described in Theorem \ref{especialita} and its immediate combinatorial consequences in terms of relations between cardinals of some sets of permutations and sums involving subsets of a certain set. This also finishes our study of symmetries within multivariate Eulerian polynomials

Going further lies beyond our scope in this paper but it is clear that more symmetries of this kind can be unveiled in the lifting from univariate polynomials to multivariate ones. The importance of these symmetries in order to understand more about these (and related) families of polynomials and their  underlying combinatorial objects is a clear indication to continue researching in this direction.

\printbibliography
\end{document}